\numberwithin{equation}{section}		
\numberwithin{figure}{section}			
\numberwithin{table}{section}
\newtheorem{Lemma}{Lemma}
\newtheorem{Corollary}{Corollary}
\newtheorem{Theorem}{Theorem}
\theoremstyle{definition}
\newtheorem{Example}{Example}
\newtheorem{Remark}{Remark}
\title{\textbf{Construction of Butson matrices using Fourier matrices as input}}
\author{
		\normalfont 								\normalsize
        Farouk Adda $^{*}$\\[-3pt]		\normalsize\\
}
\date{}
\begin{document}
\thispagestyle{empty}
\maketitle

\noindent\textbf{Abstract.} Butson matrices are square orthogonal matrices, denoted by $BH(m,n)$, whose entries are the complex $m$th roots of unity and satisfy the condition\\ $BH(m,n)\cdot{BH(m,n)}^*=nI_n$, where ${BH(m,n)}^*$ is the conjugate transpose of $BH(m,n)$ and $I_n$ is the identity matrix. In this work, we propose constructions for $BH(m,(n-1)n)$ then $BH(m,(\frac{n}{2}-1)n)$, when $n$ and $m$ are even numbers, using the existing $BH(m,n)$. For each case, we provide two construction methods: one uses a single input Butson matrix, and another uses two input Butson matrices. Moreover, we present some results about the construction of Hadamard matrices.\\

\let\thefootnote\relax\footnote{This research is supported by the Laboratory of Algebra and Numbers Theory (LATN), Algeria.\\$*$ Corresponding author. E-mail: adda-farouk@hotmail.com.(A.Farouk).\\ U.S.T.H.B., Faculty of mathematics, B.P.32, El Allia 16111 Bab Ezzouar, Algiers, Algeria.}
\\
\textbf{Keywords:} Fourier matrices ; Scarpis construction; Butson matrices.\\
\\
\textbf{AMS Subject Classification: 15B20, 05C31.} \\
\section{Introduction}
Fourier matrices are the matrices $F_n$ of order $n$, with entries defined by
$$[F_n]_{i\;j}=e^{\dfrac{2\pi\mathbf{i}(i-1)(j-1)}{n}},$$
where $\mathbf{i}^2=-1$. This matrix is orthogonal, and its entries are the complex
$n$th roots of unity, making it a $BH(n,n)$. This demonstrates that a Butson matrix exists for any order, unlike Hadamard matrices.\\
\\
Butson matrices are also known as Complex Hadamard Matrices. However, the latter term is typically reserved for matrices whose entries are from the set $\{\mathbf{i},-\mathbf{i},-1,1\}$, which are powers of the complex $4$th root of unity. These matrices share some properties with Hadamard matrices. They can be classified under row and column permutations, as well as row or column multiplication by a complex $m$th root of unity. Consequently, each class can be represented by a \textit{normalized matrix}, also known as a \textit{dephased} matrix, where the first row and column consist entirely of ones. The sum of the entries in any row or column of such matrix, except for the first row or column, is equal to zero.\\
 \\
Another key property of this representative is that by deleting the first row and column, we obtain a matrix whose any dot product of two rows or two columns is equal to $-1$. This resulting matrix is called the \textit{Core} of the Butson matrix.\\
\\
The construction of Butson matrices, remains a challenging and unresolved problem in contemporary mathematics. Despite numerous proposed construction methods aimed at determining matrices of various orders (see \cite{c4,const1,const2,const3,const4}), an exhaustive list or a recursive construction method for any specific root of unity has yet to be established.\\
\\
The classification of Butson matrices is also a major open problem in this field, presenting a complexity that surpasses even that of their construction. Currently, classes are fully determined only for orders less than $6$ (see\cite{const2}). Numerous studies have attempted to further classify these matrices (see \cite{const2,class1,class2,class3,class4}), but no complete lists have been provided.\\
\\
Scarpis construction is one of the earliest methods for constructing Hadamard matrices, notable for utilizing existing Hadamard matrices as input. Introduced by U. Scarpis in 1898, this construction employs Hadamard matrices of order $p+1$ to produce a Hadamard matrix of order $p(p+1)$, where $p$ is a prime number congruent to 3 modulo 4 (see \cite{h5}). This work was later generalized by D-\u{Z}. Djokovi\'c in \cite{h8}, who extended it to include powers of prime numbers congruent to 3 modulo 4.\\
\\
Building on these ideas, we succeeded in \cite{h1} in developing a new construction that inputs a Hadamard matrix of order $2(q+1)$ satisfying certain conditions to output a Hadamard matrix of order $2q(q+1)$, where $q$ is a power of a prime number congruent to 1 modulo 4. We demonstrated that Paley type II matrices meet these conditions and can serve as input matrices.\\
\\
In \cite{h2}, we further extended the range of output matrices obtainable from this constructions by analyzing the row indexing process. We defined a set of Latin squares, termed Latin Squares Eligible for Scarpis Construction (LSESC), and utilized them in the establishment methods. We defined constructions of Hadamard matrices of orders $n(n-1)$ and $n(\frac{n}{2}-1)$ from a Hadamard matrix of order $n$. Additionally we proposed a construction of Hadamard matrices of order $n(\frac{n}{k}-1)$ from such matrix of order $n$, where $k$ is multiple of four that divides $n$ into an even number.\\
\\
The work of Scarpis was recently generalized in \cite{c4} to develop constructions of Butson matrices. This generalization provides a method for constructing a Butson matrix of order $q(q+1)$ from a Butson matrix of order $q+1$, where $q$ is any power of a prime number.\\
\\
In this article, we begin by introducing the concept of LSESC. We explore their relationship with Mutually Orthogonal Latin Squares (MOLS) and we initiate the construction of Butson matrices in subsequent sections. In the second section, we utilize these findings to construct Butson matrices of order $n(n-1)$ from those of order $n$. We propose two construction methods: one that uses a single input Butson matrix and another that employs two input matrices. In the final section, we present a method for constructing Butson matrices of order $n(\frac{n}{2}-1)$ from existing matrices of order $n$. These inputs must satisfy certain conditions, which are met by some Fourier matrices. At the end of each of the last two sections, we compute the number of output matrices. These matrices can be inequivalent, suggesting that their classification may require further refinement.\\
\\
First, let's recall some definitions and notation.\\
\\
The $i$-th row of a matrix $A=(a_{ij})_{1 \leq i\leq n}^{1 \leq j\leq n}$ is denoted by $\mathbf{a}_i$. $A^{\top}$ denotes the transpose matrix of $A$, and $A^*=\overline{A}^{\top}$ is the conjugate transpose of $A$. We denote by $\mathbb{A}_n$ the set $\{1,...,n\}$. Let $\mathbf{J}_m$ be the $1\times m$ matrix (row vector) with all entries equal to $1$. $\textrm{O}_{m,p}$ is the zero matrix of size $m\times p$. Two $n\times n$ matrices $X,Y$ are \textit{disjoint} if there is no position where they are both nonzero.\\
\\
Two vectors of $\mathbb{C}^n$ are orthogonal if their inner product (or dot product) is 0 (i.e., two $1\times n$ vectors $\mathbf{x}=(x_i)$ and $\mathbf{y}=(y_i)$ are orthogonal if\\ $\langle\mathbf{x},\mathbf{y}\rangle:=\sum_{i=1}^{n}x_i\overline{y_i}= \mathbf{x}\overline{{\mathbf{y}}}^{\top}= 0$).\\
\\
The Kronecker (or tensor) product $X \otimes Y$ of two matrices $X = (x_{ij})$ and $Y$ is the block matrix $X\otimes Y=(x_{ij}Y )$.\\
\\
Tensors are multidimensional arrays, with vectors being first-order tensors and matrices being second-order tensors. Tensors of order higher than two are referred to as \textit{higher order tensors}. The set of all tensors of  order $l$ and of the same dimenssions $n_1\times n_2 \times ....\times n_l$ form a vector space, and denoted by $\mathbb{T}_{n_1,n_2,...,n_l}$.\\
\\
In this study we focus on third-order tensors. A \textit{cubic tensor} is a third-order tensor where all dimensions are equal to an integer $n$. The set of all cubic tensors is denoted by $\mathbb{T}n$. Using the notation from \cite{b3}, tensors are denoted by calligraphic letters like $\mathcal{X}, ;\mathcal{Y}, \ldots$. \textit{Slices}, or two-dimensional sections of a tensor, include horizontal slices $\mathbf{X}{i::}$, lateral slices $\mathbf{X}{:j:}$, and frontal slices $\mathbf{X}{::k}=\mathbf{X}_k$.
\section{Latin squares eligible for Scarpis construction }
A \textit{Latin square} of order $n$ is an $n\times n$ matrix with entries chosen from the $n$-set of \textit{symbols}, such that each symbol appears exactly once in each row and exactly once in each column. Here, the rows and columns of a Latin square of order $n$ are indexed by the elements of $\mathbb{A}_n$, or the elements of a group of order $n$.\\
 \\
 We denote the set of all Latin squares of order $n$ over a fixed symbols set by $\mathbb{L}_n$. If we permute the rows or columns of a Latin square, then the result is a Latin square; and if we permute the symbols set of a Latin square or replace the symbols set by another symbols set, then the result is a Latin square. Any Latin square obtained from another Latin square, $L$, by any combination of these operations is said to be \textit{isotopic} to $L$. Clearly, isotopy is an equivalence relation. Then, Latin squares are classified by \textit{isotopy}.\\
\\
A pair of Latin squares $L$ and $L'$, of the same order are said to be \textit{orthogonal} if for each $a$ in the symbols set of $L$ and each $b$ in the symbols set of $L'$, there exists a unique pair $i,j$ such that $l_{i\;j}=a$ and $l'_{i\;j}=b$. $N(n)$ denotes the maximum cardinality of a set of MOLS of order $n$.\\
\\
\noindent Two Latin squares $L$ and $L'$ of order $n$ are said to be eligible for Scarpis construction, or \textit{LSESC}, if and only if for every $i,i'\in \{1,...,n\}$, there exists a unique $j\in\{1,...,n\}$, such that $l_{ij}=l'_{i'j}$.The maximal number of pairwise LSESC squares of order $n$ is denoted by $N'(n)$.\\
\\
By taking a set of LSESC and, for each entry $l_{i\;j}=a$, exchanging $a$ and $i$, the result is a set of MOLS, and vice versa. These two sets are conjugate, and the notions are equivalent. Thus, it follows that $N'(n)=N(n)$.\\
\\
A longstanding conjecture suggests that a complete set of MOLS with $N(n)=n-1$ exists only if $n$ is a power of prime number. This conjecture has been verified for orders less than or equal to $6$ but remains open for higher orders (see \cite{b2} Chapter 1). The conjecture can be extended to LSESC. While in the next constructions we need sets LSESC of cardinality $n-1$ for each $n$. So, if the conjecture is true, the values these constructions will extend are powers of prime numbers.\\
\\
An example of a complete set of LSESC is the set
$$\mathcal{L}=\{L_b:b\in GF(q)-{0}\},$$
where $L_b$ is the $q\times q$ matrix with entries $l_{ij}=x_i+bx_j$, and $GF(q)=\{x_1,x_2,...,x_q\}$ is the Galois field of order $q$. These are used in the classical construction of Scarpis and form a complete set of MOLS (see \cite{b2}). However, we can also construct a complete set of LSESC that is not a set of MOLS, as demonstrated in Theorem 2.6 in \cite{h2}. This theorem proposes an algorithm for constructing a set of LSESCs of size $n-1$ of each order $n$, which can subsequently be adapted to generate complete sets of MOLS.\\
\\
We consider a function that associate to every Latin square a cubic tensor defined as follows
$$\begin{array}{ccccc}
F&:&\mathbb{L}_n&\rightarrow&\mathbb{T}_n\\
&&L&\mapsto&\mathcal{X}
\end{array},$$
where $\mathcal{X}$ is the cubic tensor of frontal slices
$$\mathbf{X}_k=\left[\begin{array}{c}\delta(l_{i\;k},j)\end{array}\right],$$
and
$$\delta(i,j)=\left\{\begin{array}{cc}1&\mbox{if}\;i=j\\0&\mbox{otherwise}\end{array}\right..$$
We define also the functions
$$\begin{array}{ccccc}
G_m&:&\mathbb{T}_n&\rightarrow&\mathbb{T}_{mn,mn,n}\\
&&\mathcal{X}&\mapsto&\mathcal{Y}
\end{array}$$
where, $m\in\mathbb{N}$, and $\mathcal{Y}$ is the third-order tensor of frontal slices
$$\mathbf{Y}_k=I_m\otimes \mathbf{X}_k.$$
\begin{Remark}
Let $L\in\mathbb{L}_n$, and let $F(L)=\mathcal{X}$. Then
 \begin{itemize}
 \item[(1)]Every frontal slice $\mathbf{X}_t$ is a permutation matrix such that, the nonzero entry in row $i$ is at the $l_{it}$-th position. So, the frontal slices form a set of disjoint permutation matrices;
 \item[(2)]The horizontal slices $\mathbf{X}_{t::}$ also form a set of disjoint permutation matrices, where every row $i$ contain the nonzero element in the $l_{ti}$-th position ;
 \item[(3)]Furthermore, the lateral slices $\mathbf{X}_{:t:}$ form a set  of disjoint permutation matrices, where
                     \begin{eqnarray}L=\sum_{i=1}^{n}i\mathbf{X}_{:i:}.\end{eqnarray}
\end{itemize}
Therefore, using a fixed symbols set, the notion of isotopic is equivalent in the image of $F$ to the permutations over cubic tensors.
\end{Remark}

\section{Construction of $BH(m,(n-1)n)$ from $BH(m,n)$}
Let $\mathcal{BH}(m,d)$ denote the set of all normalized Butson matrices of order $d$ ( not classified under row or column permutations). We define a map $\Phi$ that associates to each Butson matrix of order $n$ a Butson matrix of order $(n-1)n$. Formally, we have
$$\Phi:\mathcal{BH}(m,n)\to\mathcal{BH}(m,(n-1)n)$$
$$\;\;\;\;H\to\Phi(H).$$
For a given complete set of LSESC of order $n$, $\{L_1,L_2,...,L_{n-1}\}$, we call complete tensor set of LSESC, the set $\{F(L_1),F(L_2),...,F(L_{n-1})\}$. Consequently, we obtain the following result:
\begin{Theorem}[Construction theorem]
Let $H\in \mathcal{BH}(m,n)$ of a \textit{Core} $C$. Let $\mathbb{X}=\{\mathcal{X}^{(1)},...,\mathcal{X}^{(n-2)}\}$ be a complete tensor set of LSESC of order $n-1$. Let $B_0=H'\otimes \mathbf{J}_{n-1}$, with $H'$ the sub-matrix of $H$ obtained by deleting the first row, and
\begin{eqnarray}B=\left[\begin{array}{c|cccc}
{\mathbf{J}_{n-1}}^{\top}\otimes \mathbf{c}_1&C&C&...&C\\
{\mathbf{J}_{n-1}}^{\top}\otimes \mathbf{c}_2&\mathbf{X}^{(1)}_{1}C&\mathbf{X}^{(1)}_{2}C&...&\mathbf{X}^{(1)}_{n-1}C\\
{\mathbf{J}_{n-1}}^{\top}\otimes \mathbf{c}_3&\mathbf{X}^{(2)}_{1}C&\mathbf{X}^{(2)}_{2}C&...&\mathbf{X}^{(2)}_{n-1}C\\
\vdots & \vdots&\vdots &...&\vdots\\
{\mathbf{J}_{n-1}}^{\top}\otimes \mathbf{c}_{n-1}&\mathbf{X}^{(n-2)}_{1}C&\mathbf{X}^{(n-2)}_{2}C&...&\mathbf{X}^{(n-2)}_{n-1}C\\
\end{array}
\right].
\end{eqnarray}
Then, the matrix
$$\Phi_{\mathbb{X}}(H)=\left[\begin{array}{c}
   B_0\\
  B\\
  \end{array}
\right]$$
is a Butson matrix of order $n(n-1)$ with entries powers of the complex $m$th root of unity.
\end{Theorem}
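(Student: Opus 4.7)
My plan is to show that $M:=\Phi_{\mathbb{X}}(H)$ is a square matrix of order $n(n-1)$ whose entries are $m$-th roots of unity and whose $n(n-1)$ rows are pairwise orthogonal with common squared norm $n(n-1)$. The entries statement is essentially immediate: $B_0$ inherits its entries from $H'$, each block $\mathbf{J}_{n-1}^{\top}\otimes\mathbf{c}_i$ just stacks copies of a row of $C$, and by Remark~1(1) each frontal slice $\mathbf{X}^{(k)}_j$ is a permutation matrix, so $\mathbf{X}^{(k)}_j C$ is merely a row-permutation of $C$.

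The core of the argument is orthogonality, for which I first unify the rows of $B$. Label them by pairs $(i,k)$ with $i\in\{1,\ldots,n-1\}$ indexing the block row and $k\in\{1,\ldots,n-1\}$ the position inside it. Using Remark~1(1), row $k$ of $\mathbf{X}^{(i-1)}_j$ is the standard basis row vector $e_{l^{(i-1)}_{kj}}$, so row $(i,k)$ of $B$ reads as the concatenation
\[
\bigl(\mathbf{c}_i,\ \mathbf{c}_{s_1},\ \mathbf{c}_{s_2},\ \ldots,\ \mathbf{c}_{s_{n-1}}\bigr),
\]
with $s_j=l^{(i-1)}_{kj}$ when $i\ge 2$ and $s_j=k$ constantly when $i=1$. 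I would then record the three identities that follow from $H$ being a normalized Butson matrix of order $n$: $\langle\mathbf{c}_i,\mathbf{c}_i\rangle=n-1$, $\langle\mathbf{c}_i,\mathbf{c}_{i'}\rangle=-1$ for $i\ne i'$, and $\sum_q(\mathbf{c}_i)_q=-1$.

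Orthogonality now splits into three cases. Two rows of $B_0$: the Kronecker structure collapses the inner product to $(n-1)\langle\mathbf{h}'_i,\mathbf{h}'_{i'}\rangle$, equal to $n(n-1)$ when $i=i'$ and $0$ otherwise by $HH^*=nI$. A row of $B_0$ against row $(i',k')$ of $B$: each length-$(n-1)$ block of the $B$-row has entry sum $-1$, and block-by-block the inner product collapses to $-1-\sum_{j=1}^{n-1}(\mathbf{c}_i)_j=-1-(-1)=0$. Two rows $(i,k)$ and $(i',k')$ of $B$: the first block contributes $\langle\mathbf{c}_i,\mathbf{c}_{i'}\rangle$, and each of the remaining $n-1$ blocks contributes $n-1$ or $-1$ according to whether $s_j^{(i,k)}=s_j^{(i',k')}$. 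When $i=i'$ and $k\ne k'$, Latin-square row distinctness of $L^{(i-1)}$ (or, for $i=1$, just $k\ne k'$) forces disagreement in every column, giving $(n-1)+(n-1)(-1)=0$. When $i\ne i'$, the LSESC property for the pair $L^{(i-1)},L^{(i'-1)}$ --- and, in the boundary case $i=1$ or $i'=1$, the completeness of a single Latin-square row --- yields exactly one coincidence, so the $n-1$ block contributions sum to $(n-1)+(n-2)(-1)=1$, cancelling $\langle\mathbf{c}_i,\mathbf{c}_{i'}\rangle=-1$.

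The arithmetic in each case is routine; the principal obstacle is organizing the case analysis cleanly and absorbing the boundary block row $i=1$ into the same framework. The convention $s_j^{(1,k)}=k$ accomplishes this, since Latin-row completeness then plays precisely the role that the LSESC property plays in the generic case $i,i'\ge 2$.
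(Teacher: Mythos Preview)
Your proposal is correct and follows essentially the same route as the paper: both arguments verify row orthogonality by splitting into the cases $B_0$--$B_0$, $B_0$--$B$, and $B$--$B$, using the core identities $\langle\mathbf{c}_i,\mathbf{c}_{i'}\rangle=-1$, $\langle\mathbf{c}_i,\mathbf{c}_i\rangle=n-1$, $\sum_q(\mathbf{c}_i)_q=-1$, and then appealing to the Latin-square column property (for $i=i'$) and the LSESC coincidence property (for $i\ne i'$). Your convention $s_j^{(1,k)}=k$ to absorb the block row $i=1$ into the generic analysis is a small organizational improvement over the paper, which treats $\Gamma_0$ separately in each sub-case, but the underlying computations are identical.
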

\begin{proof}
We prove the theorem by showing the row orthogonality of $\Phi_{\mathbb{X}}(H)$, the columns orthogonality follows immediately.\\
Firstly, let $B$ be divided into $n-1$ block matrices $\Gamma_k$:\\
$$\Gamma_0=\left[\begin{array}{c|cccc}{\mathbf{J}_{n-1}}^{\top}\otimes \mathbf{c}_1&C&C&...&C\\\end{array}\right].$$
The $j$th row of this matrix is,
\begin{eqnarray}\left[\mathbf{c}_1|\;\mathbf{c}_{j}\;\mathbf{c}_{j}\;...\;\mathbf{c}_{j}\right].\end{eqnarray}
When $k\neq 0$
$$\Gamma_{k}=\left[\begin{array}{c|cccc}{\mathbf{J}_{n-1}}^{\top}\otimes
\mathbf{c}_{k+1}&\mathbf{X}^{(k)}_{1}C&\mathbf{X}^{(k)}_{2}C&...&\mathbf{X}^{(k)}_{n-1}C\\
\end{array}\right],$$
its $j$th row is,
\begin{eqnarray}\left[\mathbf{c}(k+1)|\;\mathbf{c}({l^{(k)}}_{j\;1})\;\;\mathbf{c}({l^{(k)}}_{j\;2})\;...\;\mathbf{c}({l^{(k)}}_{j\;m-1})\right],\end{eqnarray}
Now, we show four cases:
\begin{itemize}
\item[1] Two rows of $B_0$ are orthogonal, as the Kronecker product saves orthogonality.
\item[2] A row of $B_0$ and another of $\Gamma_k$: Firstly, let's consider the case when $k\neq0$. The scalar product of these two rows becomes:
    \begin{eqnarray}
    \mathbf{b_0}_t\overline{{\mathbf{\Gamma_k}_r}}^{\top} &=& h'_{t\;1}\overline{\sum_{i=1}^{n-1}{[\mathbf{c}(k+1)^{\top}]}_i}+\sum_{i=1}^{n-1}h'_{t\;(i+1)}\overline{\sum_{j=1}^{n-1}[\mathbf{c}({l^{(k)}}_{ri})^{\top}]_j} \nonumber \\
    &=& -\sum_{i=1}^{n}h'_{t\;i}=0. \nonumber
    \end{eqnarray}
    Here $[\mathbf{c}({l^{(k)}}_{ri})]_j$ are components of the row vector $\mathbf{c}({l^{(k)}}_{ri})$. The result is obtained as the sums $\overline{\sum_{j=1}^{n-1}[\mathbf{c}({l^{(k)}}_{ji})^{\top}]_j} $ are all equal to $-1$, we have considered the core rows. If we choose the rows of $\Gamma_0$, the result follows immediately in the same way.
\item[3] Now we consider two rows of $\Gamma_k$. When $k=0$, we obtain the following,
    \begin{eqnarray}
    \mathbf{\Gamma_0}_t\overline{{\mathbf{\Gamma_0}_s}}^{\top} &=& \mathbf{c}(1)\overline{{\mathbf{c}(1)}}^{\top}+\sum_{i=1}^{n-1}\mathbf{c}(t)\overline{{\mathbf{c}(s)}}^{\top}=(n-1)-(n-1)=0.\nonumber
    \end{eqnarray}
    We pass now to rows of other $\Gamma_k$s. The inner product is
    \begin{eqnarray}
    \mathbf{\Gamma_k}_t\overline{{\mathbf{\Gamma_k}_r}}^{\top} &=& \mathbf{c}(k+1)\overline{{\mathbf{c}(k+1)}}^{\top}+\sum_{i=1}^{n-1}\mathbf{c}({l^{(k)}}_{ti})\overline{{\mathbf{c}({l^{(k)}}_{ri})}}^{\top}=(n-1)-(n-1)=0.\nonumber
    \end{eqnarray}
    As $l^{(k)}_{ti}$ and $l^{(k)}_{ri}$ are two entries of the column $i$ of the Latin square $L^{(k)}$, they must be different.
\item[4] Finally, a row of $\Gamma_k$ and another from $\Gamma_s$: when one of the sub-matrices is $\Gamma_0$. Without  a loss of generality, we can choose $k=0$. We obtain,
    \begin{eqnarray}
    \mathbf{\Gamma_0}_t\overline{{\mathbf{\Gamma_s}_r}}^{\top} &=& \mathbf{c}(1)\overline{{\mathbf{c}(s+1)}}^{\top}+\sum_{i=1}^{n-1}\mathbf{c}(t)\overline{{\mathbf{c}({l^{(s)}}_{ri})}}^{\top}=-1-(n-2)+(n-1)=0.\nonumber
    \end{eqnarray}
    Here surely $l^{(s)}_{ri}=t$ for some unique $i$ and different otherwise. Hence the result.\\
    At the end, we consider two blocks different from $\Gamma_0$. In this case, we obtain the following formula,
    \begin{eqnarray}
    \mathbf{\Gamma_k}_t\overline{{\mathbf{\Gamma_s}_r}}^{\top} &=& \mathbf{c}(k+1)\overline{{\mathbf{c}(s+1)}}^{\top}+\sum_{i=1}^{n-1}\mathbf{c}({l^{(k)}}_{ti})\overline{{\mathbf{c}({l^{(s)}}_{ri})}}^{\top}=-1-(n-2)+(n-1)=0.\nonumber
    \end{eqnarray}

    As the set $\mathbb{X}$ is of LSESC, there exists exactly one position $i'$ where \\ $l^{(k)}_{ti'}=l^{(s)}_{ri'}$. Thus the result.

\end{itemize}
This finishes the proof of orthogonality, and the obtained matrix is\\ $BH(m,(n-1)n)$.
\end{proof}
\begin{Example}
Let's consider the Fourier matrix $F_3$, by taking $\mathbf{j}=e^{\dfrac{2\pi\mathbf{i}}{3}}$. Then, $\mathbf{j}^2=\overline{\mathbf{j}}$ and $\mathbf{j}^3=1$, and
$$F_3=\left[\begin{array}{ccc}
1&1&1\\
1&\mathbf{j}&\overline{\mathbf{j}}\\
1&\overline{\mathbf{j}}&\mathbf{j}
\end{array}\right].$$
Here, we identify an LSESC of order $2$ for implementation in the construction of $BH(3,6)$. The set containing the single Latin square of this order serves as the unique LSESC for this order.
$$\mathbb{S}=\left\{L=\left[\begin{array}{cc}
1&2\\
2&1
\end{array}\right]\right\}.$$
The corresponding set of tensors is combined of the tensor $\mathcal{X}=F(L)$, $\mathbb{X}=\{\mathcal{X}\}$. It is the tensor of frontal slices
$$X_1=\left[\begin{array}{cc}
1&0\\
0&1
\end{array}\right],\;\;X_2=\left[\begin{array}{cc}
0&1\\
1&0
\end{array}\right]$$
Therefore, we obtain,
$$\Phi_{\mathbb{X}}(F_3)=\left[\begin{array}{ccc}
\begin{array}{ccc}
1&1
\end{array}&\begin{array}{cc}
\mathbf{j}&\mathbf{j}
\end{array}&\begin{array}{cc}
\overline{\mathbf{j}}&\overline{\mathbf{j}}
\end{array}\\
\begin{array}{cc}
1&1
\end{array}&\begin{array}{cc}
\overline{\mathbf{j}}&\overline{\mathbf{j}}
\end{array}&\begin{array}{cc}
\mathbf{j}&\mathbf{j}
\end{array}\\
\begin{array}{cc}
\mathbf{j}&\overline{\mathbf{j}}\\
\mathbf{j}&\overline{\mathbf{j}}
\end{array}&\begin{array}{cc}
\mathbf{j}&\overline{\mathbf{j}}\\
\overline{\mathbf{j}}&\mathbf{j}
\end{array}&\begin{array}{cc}
\mathbf{j}&\overline{\mathbf{j}}\\
\overline{\mathbf{j}}&\mathbf{j}
\end{array}\\
\begin{array}{cc}
\overline{\mathbf{j}}&\mathbf{j}\\
\overline{\mathbf{j}}&\mathbf{j}
\end{array}&\begin{array}{cc}
\mathbf{j}&\overline{\mathbf{j}}\\
\overline{\mathbf{j}}&\mathbf{j}
\end{array}&\begin{array}{cc}
\overline{\mathbf{j}}&\mathbf{j}\\
\mathbf{j}&\overline{\mathbf{j}}
\end{array}
\end{array}\right].$$
After normalization we obtain,
$$\Phi_{\mathbb{X}}(F_3)=\left[\begin{array}{cccccc}
1&1&1&1&1&1\\
1&1&\mathbf{j}&\mathbf{j}&\overline{\mathbf{j}}&\overline{\mathbf{j}}\\
1&\mathbf{j}&\overline{\mathbf{j}}&1&\mathbf{j}&\overline{\mathbf{j}}\\
1&\mathbf{j}&1&\overline{\mathbf{j}}&\overline{\mathbf{j}}&\mathbf{j}\\
1&\overline{\mathbf{j}}&\mathbf{j}&\overline{\mathbf{j}}&\mathbf{j}&1\\
1&\overline{\mathbf{j}}&\overline{\mathbf{j}}&\mathbf{j}&1&\mathbf{j}
\end{array}\right].$$
This matrix belongs to the class $S^{(0)}_6$ as specified in \cite{const2}.
\end{Example}
\noindent This construction extend the number of matrices obtained by the generalization of Scarpis construction proposed in \cite{c4}, potentially encompassing more classes of matrices. Furthermore, this method can be further extended to include additional matrices by utilizing two Butson matrices as inputs. In other words, we define:
$$\Phi:\mathcal{BH}(m,n)\times \mathcal{BH}(m,n)\to\mathcal{BH}(m,(n-1)n)$$
$$\;\;\;\;(G,H)\to\Phi(G,H).$$
Then we conclude as follows.
\begin{Corollary}
Let $(G,H)\in \mathcal{BH}(m,n)\times \mathcal{BH}(m,n)$. Let $\mathbb{X}=\{\mathcal{X}^{(1)},...,\mathcal{X}^{(n-2)}\}$ be a complete tensor set of LSESC of order $n-1$. Then we define $$\Phi_{\mathbb{X}}(G,H)=\left[\begin{array}{c}
   B_0\\
  B\\
  \end{array}
\right],$$
with $B_0=G'\otimes \mathbf{J}_{n-1}$, where $G'$ is the sub-matrix of $G$ obtained by deleting its first row, and $B$ is defined as in (3.1) by taking $C$ as the core of $H$.
\end{Corollary}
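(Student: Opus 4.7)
The plan is to re-run the orthogonality analysis from the proof of the preceding Theorem, observing that $G$ and $H$ enter $\Phi_{\mathbb{X}}(G,H)$ in disjoint ways: $G$ only through the top block $B_0 = G' \otimes \mathbf{J}_{n-1}$, and $H$ only through its core $C$, which populates every block $\Gamma_k$ of $B$. Consequently, whether the top and bottom ingredients come from the same Butson matrix (as in the Theorem) or from two distinct matrices (as here) is immaterial for the inner-product bookkeeping, and I would verify this case by case, following the same four-case split used in the Theorem's proof.

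Cases $3$ and $4$, where both rows lie inside $B$, can be disposed of first. Neither computation mentions the rows of $H$ themselves; both depend only on the identities $\mathbf{c}_i \overline{\mathbf{c}_j}^{\top} = n-1$ for $i=j$ and $-1$ for $i \neq j$, which are properties of $C$ alone, together with the LSESC property of $\mathbb{X}$. Since $C$ is still the core of the normalized Butson matrix $H$, these cases carry over verbatim. Next, for case $1$ (two distinct rows of $B_0$), the inner product of $\mathbf{g}'_s \otimes \mathbf{J}_{n-1}$ and $\mathbf{g}'_t \otimes \mathbf{J}_{n-1}$ equals $(n-1)\,\langle \mathbf{g}'_s, \mathbf{g}'_t \rangle$, which vanishes because distinct rows of the Butson matrix $G$ are orthogonal. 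Finally, for case $2$ (a row of $B_0$ against a row of some $\Gamma_k$), the Theorem's computation only uses two facts: each core row has entry sum $-1$, and each non-first row of the top-block input has entry sum $0$. The first statement concerns $C$ and is unchanged; the second now reads $\sum_{i=1}^{n} g'_{t,i} = 0$, which holds because $G$ is also a normalized Butson matrix of order $n$. So the identity $-\sum_{i=1}^{n} g'_{t,i} = 0$ simply replaces its analogue with $h'_{t,i}$ in the Theorem, and the case closes.

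The step I expect to be the main obstacle is essentially nonexistent: the argument is a bookkeeping check that isolates exactly which properties of the input matrices are used in the original proof. What is worth articulating cleanly is that every appearance of the top-block matrix relies on two generic facts shared by any normalized Butson matrix of order $n$ with $m$-th root-of-unity entries — row orthogonality and vanishing row sums for non-first rows — so $G$ may be substituted for $H$ inside $B_0$ with no interaction whatsoever with the core $C$ used in $B$. Column orthogonality then follows by the same argument applied to the transpose, and since all entries of $\Phi_{\mathbb{X}}(G,H)$ are products of $m$-th roots of unity, the output is a $BH(m,n(n-1))$.
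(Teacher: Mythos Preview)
Your proposal is correct and follows essentially the same approach as the paper: the paper's proof likewise observes that the orthogonality of rows within $B$ and within $B_0$ carries over verbatim from Theorem~1, and then redoes only case~2 (a row of $B_0$ against a row of some $\Gamma_k$), replacing $h'_{t\,i}$ by $g'_{t\,i}$ and using $\sum_i g'_{t\,i}=0$. Your write-up is slightly more explicit about why cases~3 and~4 are untouched, but the argument is the same.
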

\begin{proof}
The orthogonality of the rows of $B$ is immediate in the same way as in Theorem 1. The rows of $B_0$ are mutually orthogonal because of the properties of Kronecker product. It remains to show the orthogonality of a row chosen from $B$, and another from $B_0$. We consider the same decomposition of Theorem 1 of $B$ into the blocks $\Gamma_k$. Then, we have for $k\neq0$,
\begin{eqnarray}
    \mathbf{b_0}_t\overline{{\mathbf{\Gamma_k}_r}}^{\top} &=& g'_{t\;1}\overline{\sum_{i=1}^{n-1}{[\mathbf{c}(k+1)^{\top}]}_i}+\sum_{i=1}^{n-1}g'_{t\;(i+1)}\overline{\sum_{j=1}^{n-1}[\mathbf{c}({l^{(k)}}_{ri})^{\top}]_j} \nonumber \\
    &=& -\sum_{i=1}^{n}g'_{t\;i}=0, \nonumber
    \end{eqnarray}
and the same result for $k=0$. Therefore, $\Phi_{\mathbb{X}}(G,H)$ is a $BH(m,n(n-1))$.
\end{proof}
\noindent This construction can be applied to the creation of Hadamard matrices, and by considering $\Phi_{\mathbb{X}}$ applied on a couple of Hadamard matrices $(G,H)\in \mathcal{H}_n\times \mathcal{H}_n$, where $\mathcal{H}_n$ normalized Hadamard matrices of order $n$ ( not classified under row or column permutations), we obtain the following result that can be proven similarly to Corollary 1.
\begin{Corollary}
Let $(G,H)\in \mathcal{H}_n\times \mathcal{H}_n$. Then we obtain the Hadamard matrix $\Phi_{\mathbb{X}}(G,H)$ defined in the same way as in Corollary 1, for each complete tensor set of LSESC $\mathbb{X}$ of order $n-1$.
\end{Corollary}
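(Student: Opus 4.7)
The plan is to deduce Corollary 2 as a direct specialization of Corollary 1 to the root-of-unity parameter $m = 2$. A normalized Hadamard matrix of order $n$ coincides with a normalized element of $\mathcal{BH}(2, n)$, since the complex second roots of unity are precisely $\{-1, +1\}$. So I would first embed $(G, H) \in \mathcal{H}_n \times \mathcal{H}_n$ into $\mathcal{BH}(2, n) \times \mathcal{BH}(2, n)$ and apply Corollary 1, which immediately yields that $\Phi_{\mathbb{X}}(G, H)$ is orthogonal of order $n(n-1)$ with entries in the second roots of unity.

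To upgrade the $BH(2, n(n-1))$ conclusion to the statement that $\Phi_{\mathbb{X}}(G, H)$ is a genuine Hadamard matrix, I would verify that all its entries actually lie in $\{-1, +1\}$, which is the only point not already contained in Corollary 1. The block $B_0 = G' \otimes \mathbf{J}_{n-1}$ inherits the $\pm 1$ entries of $G$, and every entry of $B$ is either a component of a row $\mathbf{c}_{k+1}$ of the core of $H$ (in the leftmost column of $\Gamma_k$) or an entry of $\mathbf{X}_s^{(k)} C$; since the frontal slice $\mathbf{X}_s^{(k)}$ is a permutation matrix, it only rearranges the $\pm 1$ entries of $C$. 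Combined with orthogonality, this is precisely the definition of a Hadamard matrix of order $n(n-1)$.

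Alternatively, a self-contained proof can be obtained by replaying the four-case row-orthogonality analysis from the proof of Corollary 1, with $G'$ substituted for $H'$ in the block $B_0$. The only case that needs to be checked afresh is the cross-orthogonality between a row of $B_0$ and a row of a block $\Gamma_k$, which collapses to the identity $-\sum_{i=1}^{n} g'_{t,i} = 0$; this holds because $G$ is a normalized Hadamard matrix, so each row of $G'$ sums to zero. The main obstacle here is essentially illusory: the proof of Corollary 1 never used any compatibility between its two inputs beyond their individual row-sum and core properties, so no additional ingredient is required when both inputs happen to be real, and the statement follows.
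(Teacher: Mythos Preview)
Your proposal is correct and matches the paper's own treatment: the paper simply states that Corollary~2 ``can be proven similarly to Corollary~1'' without further detail, which is exactly your alternative approach of replaying the cross-orthogonality computation with $G'$ in place of $H'$. Your primary route---observing that $\mathcal{H}_n=\mathcal{BH}(2,n)$ so that Corollary~2 is literally the case $m=2$ of Corollary~1---is in fact a cleaner way to conclude than what the paper suggests, and the additional check that all entries of $\Phi_{\mathbb{X}}(G,H)$ lie in $\{\pm1\}$ is already implicit in the definition of $BH(2,\cdot)$.
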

\noindent Finally, let's collect the number of Butson matrices that can be obtained by this constructions. By $MOLS(n-1)$, we denote the family of all complete sets of MOLS of order $n-1$ not classified under isotopy.\\
\\
From \cite{c4}, additional matrices can be derived from each construction given in Theorem 1 or Corollary 1 by deleting a row $(x_1\;x_2\;...\;x_n)$ of $H$ or $G$, respectively, then defining $B_0$. Additionally, we take
\begin{eqnarray}B=\left[\begin{array}{c|cccc}
D_{x_1}\cdot{\mathbf{J}_{n-1}}^{\top}\otimes \mathbf{c}_1&D_{x_2}\cdot C&D_{x_3}\cdot C&...&D_{x_n}\cdot C\\
D_{x_1}\cdot{\mathbf{J}_{n-1}}^{\top}\otimes \mathbf{c}_2&D_{x_2}\cdot\mathbf{X}^{(1)}_{1}C&D_{x_3}\cdot\mathbf{X}^{(1)}_{2}C&...&D_{x_n}\cdot\mathbf{X}^{(1)}_{n-1}C\\
D_{x_1}\cdot{\mathbf{J}_{n-1}}^{\top}\otimes \mathbf{c}_3&D_{x_2}\cdot\mathbf{X}^{(2)}_{1}C&D_{x_3}\cdot\mathbf{X}^{(2)}_{2}C&...&D_{x_n}\cdot\mathbf{X}^{(2)}_{n-1}C\\
\vdots & \vdots&\vdots &...&\vdots\\
D_{x_1}\cdot{\mathbf{J}_{n-1}}^{\top}\otimes \mathbf{c}_{n-1}&D_{x_2}\cdot\mathbf{X}^{(n-2)}_{1}C&D_{x_3}\cdot\mathbf{X}^{(n-2)}_{2}C&...&D_{x_n}\cdot\mathbf{X}^{(n-2)}_{n-1}C\\
\end{array}
\right].
\end{eqnarray}
where $D_{x_i}=Diag_{n-1}(x_i,x_i,...,x_i)$, for each given $i$. Notice that (3.1) is the construction obtained by deleting the first row of ones in the establishment of $B_0$. In conclusion, we obtain $$|MOLS(n-1)|\cdot|\mathcal{BH}(m,n)|^2\cdot n$$
matrices for each $n$.\\
\\
The same analysis can be applied to Hadamard matrices to get
$$|MOLS(n-1)|\cdot|\mathcal{H}_n|^2\cdot n$$
matrices for each given $n$. However, this classification is not sufficiently precise, as, for instance, taking $n=4$, $\Phi$ will define more than $24$ matrices of order $12$ must be equivalent, as there is only one class of Hadamard matrices of this order.
\section{Construction of $BH(m,(\frac{n}{2}-1)n)$ from $BH(m,n)$, when $n$ even}
Throughout this section, we let $n$ and $m$ be even numbers. We define a subset $\mathcal{A}^{(1,2)}(m,n)$ of $\mathcal{BH}(m,n)$ consisting of matrices that verifies the following conditions:
\begin{itemize}
\item[\textbf{C1.}] There exists a row $(x_1\;x_2\;...\;x_n)$ of the matrix such that the row\\ $(x_1\;-x_2\;x_3\;...\;x_{n-1}\;-x_n)$ is also a row of the matrix. (You can verify easily that these two rows are orthogonal).
\item[\textbf{C2.}] They contain at least a row and a column combined only from $-1$s and $1$s, where the common component of this row and column is $-1$.
\end{itemize}
$\mathcal{A}^{(1)}(m,n)$ and $\mathcal{A}^{(2)}(m,n)$ are used to denote the subsets of $\mathcal{BH}(m,n)$ that satisfy the first and second conditions, respectively.
\begin{Remark}
We notice that from any matrix that verifies \textbf{C2}, we can obtain a matrix that verifies \textbf{C1} by performing column permutations if necessary, then taking $(x_1\;x_2\;...\;x_n)=(1\;1\;...\;1)$.
\end{Remark}
\noindent We verify now for which values of $n$ Fourier matrices are in the subset $\mathcal{A}^{(1,2)}(m,n)$. For $i=\frac{n}{2}+1$, the components of the corresponding row of this matrix are
$${[F_n]}_{\frac{n}{2}+1\;j}=e^{\dfrac{2\pi\mathbf{i}(\frac{n}{2})(j-1)}{n}}=e^{\pi\mathbf{i}(j-1)}=-1^{j-1}.$$
It is a row of only 1s and -1s. Taking now $j=\frac{n}{2}+1$, the corresponding column is of entries,
$${[F_n]}_{i\;\frac{n}{2}+1}=e^{\pi\mathbf{i}(i-1)}=-1^{i-1}.$$
Also a column of only 1s and -1s. You can easily notice that this is the only such row and column in $F_n$. So, the common component of this row and column is the entry,
$$[F_n]_{\frac{n}{2}+1\;\frac{n}{2}+1}=-1^{\frac{n}{2}}.$$
It is equal to $1$ if $\frac{n}{2}$ even and $-1$ otherwise.\\
On the other hand, taking a row $i_0< \frac{n}{2}$ of $F_n$, the row $i_0+\frac{n}{2}$ is of entries,
\begin{eqnarray}
[F_n]_{i_0+\frac{n}{2}\;j} &=& e^{\dfrac{2\pi\mathbf{i}(i_0+\frac{n}{2}-1)(j-1)}{n}} \nonumber\\
&=& e^{\dfrac{2\pi\mathbf{i}(i_0-1)(j-1)+(\frac{n}{2})(j-1)}{n}} \nonumber\\
&=& e^{\dfrac{2\pi\mathbf{i}(i_0-1)(j-1)}{n}}\cdot e^{\dfrac{2\pi\mathbf{i}\frac{n}{2}(j-1)}{n}}\nonumber\\
&=& e^{\dfrac{2\pi\mathbf{i}(i_0-1)(j-1)}{n}}\cdot-1^{j-1},
\end{eqnarray}
which shows that the row $\mathbf{F_n}_{i_0}$ and the row $\mathbf{F_n}_{i_0+\frac{n}{2}}$ verify \textbf{C1}. We therefore obtain the following lemma.
\begin{Lemma}.\;
\begin{itemize}
\item[1.] For all $n\geq 2$, $F_n\in \mathcal{A}^{(1)}(n,n)$.
\item[2.] If $n=2d$ where $d$ is an odd number. Then, $F_n\in \mathcal{A}^{(2)}(n,n)$.
\end{itemize}
\end{Lemma}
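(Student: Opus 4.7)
The plan is to verify each of the two defining conditions of $\mathcal{A}^{(1)}(n,n)$ and $\mathcal{A}^{(2)}(n,n)$ directly from the exponential expression $[F_n]_{ij} = e^{2\pi\mathbf{i}(i-1)(j-1)/n}$, simply packaging the three short calculations already performed in the paragraph immediately above the lemma.

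For part 1, I would invoke the identity (4.1), which asserts that for every $1 \le i_0 \le n/2$,
\[
[F_n]_{i_0+\frac{n}{2}\;j} = [F_n]_{i_0\;j}\cdot (-1)^{j-1}.
\]
Choosing $i_0 = 1$, the first row is $(1,1,\ldots,1)$, so the identity says that the pair of rows indexed by $1$ and $\frac{n}{2}+1$ has the form $(x_1, x_2, \ldots, x_n) = (1,1,\ldots,1)$ and $(x_1, -x_2, x_3, \ldots, x_{n-1}, -x_n) = (1, -1, 1, \ldots, 1, -1)$ required by \textbf{C1}. (Any $i_0$ would do; taking the first row just makes the resulting pair most transparent.) This gives $F_n \in \mathcal{A}^{(1)}(n,n)$ for every even $n \ge 2$.

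For part 2, I would single out the row and the column of $F_n$ indexed by $\frac{n}{2}+1$. The calculations already supplied show
\[
[F_n]_{\frac{n}{2}+1\;j} = (-1)^{j-1}, \qquad [F_n]_{i\;\frac{n}{2}+1} = (-1)^{i-1},
\]
so this row and this column each consist entirely of $\pm 1$, with common entry $[F_n]_{\frac{n}{2}+1\;\frac{n}{2}+1} = (-1)^{n/2}$. Condition \textbf{C2} demands that this common entry equal $-1$, which is exactly the statement that $n/2$ is odd, i.e.\ $n = 2d$ with $d$ odd. Under that hypothesis $F_n \in \mathcal{A}^{(2)}(n,n)$.

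There is essentially no obstacle: every computation needed is already present in the discussion preceding the lemma, and the proof is a matter of naming the relevant rows and columns, invoking (4.1) for part 1, and parsing the parity of $n/2$ for part 2. The only point one might wish to spell out more carefully (though the author already remarks it is easy) is that $\frac{n}{2}+1$ is the unique nontrivial index producing a $\pm 1$ row/column, which one verifies by noting that $[F_n]_{ij} \in \{\pm 1\}$ for all $j$ forces $2(i-1) \equiv 0 \pmod n$, hence $i \in \{1, \frac{n}{2}+1\}$; but this observation is not strictly required for the lemma itself.
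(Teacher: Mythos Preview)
Your proposal is correct and follows exactly the paper's approach: the paper establishes the lemma by the three computations displayed just before it (the $\pm1$ row and column at index $\tfrac{n}{2}+1$, their common entry $(-1)^{n/2}$, and identity~(4.1)), and you simply invoke those same computations, picking $i_0=1$ as a witness for \textbf{C1}.
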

\noindent The elements of the subset $\mathcal{A}^{(2)}(m,n)$ have particular properties that make them useful as an input matrix for the next construction, we summarize this properties in the following lemma.
\begin{Lemma}
Let $H\in\mathcal{A}^{(2)}(m,n)$. Then, $H$ contains a $(n-2)\times(n-2)$ sub-matrix
$$T=\left[\begin{array}{c}C\\D\end{array}\right]$$
that verifies the following properties:
\begin{itemize}
\item[1.] The dot product of any two rows of $C$ is equal to $-2$, and the same for rows of $D$.
\item[2.] The inner product of a row of $C$ and another from $D$ is equal to $0$.
\item[3.] The sum of the first $\dfrac{n-2}{2}$ entries of $C$ is equal to $-1$, and the sum of the last $\dfrac{n-2}{2}$ entries is also equal to $-1$.
\item[4.] The sum of the first $\dfrac{n-2}{2}$ entries of $D$ is equal to $-1$, and the sum of the last $\dfrac{n-2}{2}$ entries is equal to $1$.
\end{itemize}
\end{Lemma}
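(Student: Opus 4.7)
The plan is to construct $T$ explicitly by removing the two distinguished rows and columns provided by \textbf{C2}, after a careful reordering, and then to verify each of the four properties by unpacking the orthogonality relations of $H$ into the pieces indexed by these special coordinates.

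First I would fix notation: let $r$ be the index of the special $\pm1$ row of $H$ and $c$ the index of the special $\pm1$ column, with $h_{rc}=-1$. Since $H$ is normalized, $\mathbf{h}_1=\mathbf{J}_n$ and the first column equals $\mathbf{J}_n^{\top}$, so orthogonality of $\mathbf{h}_r$ with $\mathbf{h}_1$ forces the column index set to split as $\{1,\dots,n\}=P\sqcup N$ with $|P|=|N|=n/2$, where $h_{rj}=+1$ on $P$ (and $1\in P$) and $h_{rj}=-1$ on $N$ (and $c\in N$). In the same way, the row index set splits as $P'\sqcup N'$ according to the sign of $h_{ic}$, with $1\in P'$ and $r\in N'$.

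The key preliminary observation is that for every $i\notin\{1,r\}$, both partial sums $\sum_{j\in P}h_{ij}$ and $\sum_{j\in N}h_{ij}$ vanish: combining $\langle\mathbf{h}_i,\mathbf{h}_1\rangle=0$ (total row sum zero) with $\langle\mathbf{h}_i,\mathbf{h}_r\rangle=0$, and using that $\mathbf{h}_r$ is real, yields that the $P$-sum equals the $N$-sum and that they add to zero. I then define $T$ to be the sub-matrix of $H$ on rows $\{1,\dots,n\}\setminus\{1,r\}$ and columns $\{1,\dots,n\}\setminus\{1,c\}$, with the columns reordered so that indices in $P\setminus\{1\}$ come first and those in $N\setminus\{c\}$ come last, and with the rows reordered so that $C$ collects the indices in $P'\setminus\{1\}$ and $D$ collects those in $N'\setminus\{r\}$.

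With this setup the four properties reduce to one-line verifications. For (1) and (2), given two rows of $H$ with indices $i,i'\notin\{1,r\}$, I split $\langle\mathbf{h}_i,\mathbf{h}_{i'}\rangle=0$ into the contribution from column $1$ (equal to $1$), the contribution from column $c$ (equal to $h_{ic}\overline{h_{i'c}}\in\{\pm1\}$), and the remaining $n-2$ columns, which form the inner product of the corresponding rows of $T$; the column-$c$ term is $+1$ when $i,i'$ lie in the same half ($C$ or $D$) and $-1$ when they lie in opposite halves, yielding $-2$ and $0$ respectively. For (3) and (4), the preliminary observation gives $\sum_{j\in P\setminus\{1\}}h_{ij}=-h_{i1}=-1$ and $\sum_{j\in N\setminus\{c\}}h_{ij}=-h_{ic}$, which equals $-1$ on rows of $C$ and $+1$ on rows of $D$. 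There is no serious obstacle; the proof is pure bookkeeping. The only point worth a remark is that the column and row reorderings are legitimate because inner products of rows do not depend on the column ordering, while the meaning of the ``first'' and ``last'' halves of the entries is prescribed exactly by the construction.
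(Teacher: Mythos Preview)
Your proof is correct and follows essentially the same approach as the paper. Both arguments delete the all-ones row/column together with the special $\pm1$ row/column guaranteed by \textbf{C2}, reorder the remaining rows and columns according to the signs in that special row and column, and then read off properties 1--4 from the orthogonality relations of $H$; the only cosmetic difference is that the paper writes out the permuted matrix $H'$ explicitly and solves small $2\times 2$ linear systems for the half-row sums, whereas you use the partition notation $P,N,P',N'$ and first show the full partial sums over $P$ and $N$ vanish before subtracting the column-$1$ and column-$c$ contributions.
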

\begin{proof}
Let $H'$ be the matrix obtained from $H$ by performing the following permutations:
\begin{itemize}
\item[1)] A column permutation that puts the $\{1,-1\}$ column that verifies \textbf{C2} as the second column.
\item[2)] A row permutation that puts the $\{1,-1\}$ row that verifies \textbf{C2} as the second row
\item[3)] A column permutation that transforms the second row obtained from 2) to the row $(1\;-1\;1\;...\;1\;-1\;...\;-1)$(the first entry is 1 and the second is $-1$, then $\frac{n-2}{2}$ 1s, and $\frac{n-2}{2}$ -1s).
\item[4)] A row permutation that transforms the second column obtained from 1) to the column $(1\;-1\;1\;...\;1\;-1\;...\;-1)$.
\end{itemize}
$T$ is the submatrix obtained by deleting the first two rows and columns of $H'$. It is clearly a submatrix of $H$. We can write $H'$ as follows:
$$H'=\left[\begin{array}{c|c}
\begin{array}{cc}
1&1\\
1&-1
\end{array}&\begin{array}{cccccc}
1&...&1&1&...&1\\
1&...&1&-1&...&-1
\end{array}\\
\hline
\begin{array}{cc}
1&1\\
\vdots&\vdots\\
1&1
\end{array}&\begin{array}{c}
C
\end{array}\\
\hline
\begin{array}{cc}
1&-1\\
\vdots&\vdots\\
1&-1
\end{array}&\begin{array}{c}
D
\end{array}
\end{array}\right]$$
where,
$$T=\left[\begin{array}{c}C\\D\end{array}\right]$$
$C$ and $D$ are $\frac{n-2}{2}\times n-2$ matrices.\\
The inner product of two rows of $H'$ that contain a row of $C$ is
\begin{eqnarray}
\mathbf{h'}_t\cdot\overline{\mathbf{h'}}_r^{\top}&=&1+1+\mathbf{c}_{t-2}\overline{\mathbf{c}}_{r-2}^{\top}=0 \nonumber\\
&\Rightarrow& \mathbf{c}_{t-2}\overline{\mathbf{c}}_{r-2}^{\top}=-2.
\end{eqnarray}
Similarly, for two rows that contain $D$
\begin{eqnarray}
\mathbf{h'}_t\cdot\overline{\mathbf{h'}}_r^{\top}&=&1+-1\cdot-1+\mathbf{d}_{t-(\frac{n-6}{2})}\overline{\mathbf{d}}_{r-(\frac{n-6}{2})}^{\top}=0 \nonumber\\
&\Rightarrow& \mathbf{d}_{t-(\frac{n-6}{2})}\overline{\mathbf{d}}_{r-(\frac{n-6}{2})}^{\top}=-2.
\end{eqnarray}
Now, by taking a row that contains a row of $C$ and another that contains a row of $D$, we get
\begin{eqnarray}
\mathbf{h'}_t\cdot\overline{\mathbf{h'}}_r^{\top}&=&1+1\cdot-1+\mathbf{c}_{t-2}\overline{\mathbf{d}}_{r-(\frac{n-6}{2})}^{\top}=0 \nonumber\\
&\Rightarrow& \mathbf{c}_{t-2}\overline{\mathbf{d}}_{r-(\frac{n-6}{2})}^{\top}=0.
\end{eqnarray}
Hence, 1. and 2. of the lemma.\\
For a row $2<t\leq\frac{n+2}{2}$ of $H'$, let's put $X=\sum_{j=1}^{\frac{n-2}{2}}c_{t-2\;j}$ and $Y=\sum_{j=\frac{n-2}{2}+1}^{n-2}c_{t-2\;j}$. The inner product of the row $\mathbf{h'}_t$ and the second row of $H'$ is
\begin{eqnarray}
\mathbf{h'}_t\cdot\overline{\mathbf{h'}}_2^{\top}&=&1+1\cdot-1+\sum_{j=1}^{\frac{n-2}{2}}c_{t-2\;j}-\sum_{j=\frac{n-2}{2}+1}^{n-2}c_{t-2\;j}=0 \nonumber\\
&\Rightarrow& X-Y=0. \nonumber
\end{eqnarray}
On the other hand, the sum of the row $\mathbf{h'}_t$ is equal to zero. So,
$$1+1+\sum_{j=1}^{\frac{n-2}{2}}c_{t-2\;j}+\sum_{j=\frac{n-2}{2}+1}^{n-2}c_{t-2\;j}=0\Rightarrow X+Y=-2.$$
We therefore obtain the system of linear equations
$$\left\{\begin{array}{ccc}
X-Y&=&0\\
X+Y&=&-2
\end{array}\right.$$
of solutions $X=-1$ and $Y=-1$. Thus, 3..\\
By taking $\frac{n+2}{2}<t\leq n$, then letting $X'=\sum_{j=1}^{\frac{n-2}{2}}d_{t-(\frac{n-6}{2})\;j}$ and $Y'=\sum_{j=\frac{n-2}{2}+1}^{n-2}d_{t-(\frac{n-6}{2})\;j}$, then using the same reasoning, we obtain the system of equations
$$\left\{\begin{array}{ccc}
X'-Y'&=&-2\\
X'+Y'&=&0
\end{array}\right.$$
of solutions $X'=-1$ and $Y'=1$. Whence, 4..
\end{proof}
\noindent We describe a procedure that inputs a $BH(m,n)$ and outputs a $BH(m,(\frac{n}{2}-1)n)$. This process can be identified by a mapping
$$\Psi:\mathcal{A}^{(1,2)}(m,n)\to\mathcal{BH}(m,(\frac{n}{2}-1)n)$$
$$\;\;\;\;H\to\Psi(H).$$
We let the sub-matrix $T$ of Lemma 2 be as follows,
$$T=\left[\begin{array}{cc}
T^{[1]}&T^{[2]}
\end{array}\right]=\left[\begin{array}{c}
C\\
D
\end{array}\right]=\left[\begin{array}{cc}
C^{[1]}&C^{[2]}\\
D^{[1]}&D^{[2]}
\end{array}\right]$$
where, $T^{[1]}$ defines the first $\frac{n}{2}-1$ columns of $T$ and $T^{[2]}$ the remaining columns. Similarly for $C^{[1]}$ with $C^{[2]}$ for $C$, and $D^{[1]}$ with $D^{[2]}$ for $D$.\\
Next, we summarize the following theorem.
\begin{Theorem}[Construction theorem]
Let $H\in \mathcal{A}^{(1,2)}(m,n)$. Let $\mathbb{X}=\{\mathcal{X}^{(1)},...,\mathcal{X}^{(\frac{n}{2}-2)}\}$ be a complete tensor set of LSESC of order $\frac{n}{2}-1$. By $G_2$, we associate to $\mathbb{X}$ the tensors set\\ $\{\mathcal{Y}^{(1)}=G_2(\mathcal{X}^{(1)}),\mathcal{Y}^{(2)}=G_2(\mathcal{X}^{(2)}),...,\mathcal{Y}^{(\frac{n}{2}-2)}=G_2(\mathcal{X}^{(\frac{n}{2}-2)})\}$.  Let\\ $B_0=H'\otimes \mathbf{J}_{\frac{n}{2}-1}$, with $H'$ the sub-matrix of $H$ obtained by deleting the two rows $(x_1\;x_2\;...\;x_n)$ and $(x_1\;-x_2\;...\;x_{n-1}\;-x_n)$, and
\begin{eqnarray}B=\left[\begin{array}{c|cccc}
E_1(x_1,x_2)&T_{x_3,x_4}&T_{x_5,x_6}&...&T_{x_{n-1},x_n}\\
E_2(x_1,x_2)&\mathbf{Y}^{(1)}_{1}T_{x_3,x_4}&\mathbf{Y}^{(1)}_{2}T_{x_5,x_6}&...&\mathbf{Y}^{(1)}_{\frac{n}{2}-1}T_{x_{n-1},x_n}\\
E_3(x_1,x_2)&\mathbf{Y}^{(2)}_{1}T_{x_3,x_4}&\mathbf{Y}^{(2)}_{2}T_{x_5,x_6}&...&\mathbf{Y}^{(2)}_{\frac{n}{2}-1}T_{x_{n-1},x_n}\\
\vdots & \vdots&\vdots &...&\vdots\\
E_{\frac{n}{2}-1}(x_1,x_2)&\mathbf{Y}^{(n-2)}_{1}T_{x_3,x_4}&\mathbf{Y}^{(n-2)}_{2}T_{x_5,x_6}&...&\mathbf{Y}^{(n-2)}_{\frac{n}{2}-1}T_{x_{n-1},x_n}\\
\end{array}
\right].
\end{eqnarray}
where
$$E_i(x_1,x_2)=\left[\begin{array}{c}
\mathbf{J}_{\frac{n}{2}-1}^{\top}\otimes [x_1\mathbf{c_i^{[1]}}\;x_2\mathbf{c_i^{[2]}}]\\
\mathbf{J}_{\frac{n}{2}-1}^{\top}\otimes [x_1\mathbf{d_i^{[1]}}\;x_2\mathbf{d_i^{[2]}}]
\end{array}\right],$$
and
$$T_{x_d,x_{d+1}}=\left[\begin{array}{cc}
D_{x_d}T^{[1]}&D_{x_{d+1}}T^{[2]}\\
\end{array}\right]$$
with $D_{x_d}=Diag_{n-2}(x_d,x_d,...,x_d)$.
Then, the matrix
$$\Psi_{\mathbb{X}}(H)=\left[\begin{array}{c}
   B_0\\
  B\\
  \end{array}
\right]$$
is a Butson matrix of order $n(\frac{n}{2}-1)$ with entries powers of the $m$th root of unity.
\end{Theorem}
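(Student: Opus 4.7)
The plan is to mimic the case analysis used in Theorem 1 and prove row orthogonality of $\Psi_{\mathbb{X}}(H)$; column orthogonality will then follow automatically. The new complication is that every block row $\Gamma_i$ of $B$ splits horizontally into a \emph{C-part} (top $\frac{n}{2}-1$ rows, coming from $C$) and a \emph{D-part} (bottom $\frac{n}{2}-1$ rows, coming from $D$) via the decomposition of $T$ in Lemma 2, and that every block column of $B$ splits vertically into $T^{[1]},T^{[2]}$ halves scaled by the different entries $x_{2j-1},x_{2j}$ of the two deleted rows of $H$. The lift $\mathcal{Y}^{(k)}=G_2(\mathcal{X}^{(k)})=I_2\otimes\mathbf{X}^{(k)}$ is exactly what is needed so that the permutation inside each $\Gamma_i$ sends C-rows to C-rows and D-rows to D-rows, keeping the C/D split intact through all multiplications.

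Orthogonality within $B_0=H'\otimes\mathbf{J}_{\frac{n}{2}-1}$ is immediate from the orthogonality of the rows of $H'$ together with the multiplicative behaviour of the Kronecker product. The routine part of the proof will be the inner products of two rows of $B$: using $|x_d|=1$ throughout, two C-rows (or two D-rows) at distinct positions of the same $\Gamma_i$ contribute $|\mathbf{c}_i|^2=n-2$ in the $E_i$ block and $-2$ in each of the $\frac{n}{2}-1$ remaining blocks by Lemma 2 part 1 and the Latin-square condition $l^{(k)}_{rj}\neq l^{(k)}_{sj}$, giving total $0$; two such rows in different $\Gamma_{i_1},\Gamma_{i_2}$ contribute $-2$ in the $E$ block, while the LSESC property of $\mathbb{X}$ combined with Lemma 2 part 1 forces exactly one of the other blocks to contribute $n-2$ and the remaining $\frac{n}{2}-2$ to contribute $-2$, again summing to $0$; any C-row versus D-row pair contributes $0$ block by block by Lemma 2 part 2.

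The delicate case is the inner product of a row $\mathbf{b}_{0,t}$ of $B_0$ with a row of some $\Gamma_i$. I would pair consecutive width-$(\frac{n}{2}-1)$ blocks of $B_0$ so that each pair matches one width-$(n-2)$ column block of $B$. After factoring out the unit scalars $\overline{x_d}$, each block then contributes $h'_{t,2j-1}\overline{x_{2j-1}}S^{[1]}+h'_{t,2j}\overline{x_{2j}}S^{[2]}$, where $S^{[1]},S^{[2]}$ are the conjugated sums of the first and second halves of the relevant row of $C$ or $D$. Lemma 2 part 3 gives $S^{[1]}=S^{[2]}=-1$ for every C-row, so the sum over blocks collapses to $-\langle\mathbf{h'}_t,(x_1,\ldots,x_n)\rangle$, which vanishes because $(x_1,\ldots,x_n)$ is one of the two deleted rows of $H$. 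Lemma 2 part 4 gives the asymmetric pattern $S^{[1]}=-1,\ S^{[2]}=+1$ for every D-row, so the same sum collapses to $-\langle\mathbf{h'}_t,(x_1,-x_2,x_3,\ldots,x_{n-1},-x_n)\rangle$, which vanishes because condition \textbf{C1} guarantees that this sign-twisted vector is \emph{also} a deleted row of $H$.

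I expect this D-type subcase to be the main obstacle, since its vanishing is not purely formal: it rests on the exact alignment between the asymmetric half-sum pattern of Lemma 2 part 4 and the alternating sign pattern of the companion row supplied by \textbf{C1}. This dovetailing is what makes the two conditions defining $\mathcal{A}^{(1,2)}(m,n)$ genuinely required together rather than separately, and it also clarifies the role of $G_2$: doubling the tensor by $I_2\otimes\mathbf{X}^{(k)}$ is precisely what preserves the C/D row partition on which the half-sum argument depends. Once all four pair cases have been verified, $\Psi_{\mathbb{X}}(H)$ is a $BH(m,(\frac{n}{2}-1)n)$ as claimed.
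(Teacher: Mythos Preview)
Your proof plan is correct and follows essentially the same approach as the paper: the same four-case row analysis, the same use of Lemma~2 parts~1--4 to evaluate block contributions, and the same identification of the $B_0$-versus-$D$-row case as the place where condition~\textbf{C1} is invoked via the sign-twisted deleted row. Your explanation of why $G_2$ (i.e.\ $I_2\otimes\mathbf{X}^{(k)}$) is needed to preserve the $C/D$ partition is in fact more explicit than the paper's treatment.
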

\begin{proof}
In the same way as in Theorem 1, we proceed by checking the row orthogonality. We first divide $B$ into $(n-2)\times (\frac{n}{2}-1)n$ block matrices $\Gamma_k$ in similar way to Theorem 1. The rows of $\Gamma_0$ are of the form
\begin{eqnarray}\left[x_1\mathbf{c^{[1]}}_1\;x_2\mathbf{c^{[2]}}_1|\;x_3\mathbf{c^{[1]}}_j\;x_4\mathbf{c^{[2]}}_j\;x_5\mathbf{c^{[1]}}_j\;x_6\mathbf{c^{[2]}}_j\;...\;x_{n-1}\mathbf{c^{[1]}}_j\;x_n\mathbf{c^{[2]}}_j\right]\end{eqnarray}
or
\begin{eqnarray}\left[x_1\mathbf{d^{[1]}}_1\;x_2\mathbf{d^{[2]}}_1|\;x_3\mathbf{d^{[1]}}_j\;x_4\mathbf{d^{[2]}}_j\;x_5\mathbf{d^{[1]}}_j\;x_6\mathbf{d^{[2]}}_j\;...\;x_{n-1}\mathbf{d^{[1]}}_j\;x_n\mathbf{d^{[2]}}_j\right].\end{eqnarray}
While the rows of $\Gamma_k$, when $k\neq 0$ are
\begin{eqnarray}\left[x_1\mathbf{c^{[1]}}(k+1)\;x_2\mathbf{c^{[2]}}(k+1)|\;x_3\mathbf{c^{[1]}}(l^{(k)}_{j\;1})\;x_4\mathbf{c^{[2]}}(l^{(k)}_{j\;1})\;...\;x_{n-1}\mathbf{c^{[1]}}(l^{(k)}_{j\;(\frac{n}{2}-1)})\;x_n\mathbf{c^{[2]}}(l^{(k)}_{j\;(\frac{n}{2}-1)})\right].\end{eqnarray}
or
\begin{eqnarray}\left[x_1\mathbf{d^{[1]}}(k+1)\;x_2\mathbf{d^{[2]}}(k+1)|\;x_3\mathbf{d^{[1]}}(l^{(k)}_{j\;1})\;x_4\mathbf{d^{[2]}}(l^{(k)}_{j\;1})\;...\;x_{n-1}\mathbf{d^{[1]}}(l^{(k)}_{j\;(\frac{n}{2}-1)})\;x_n\mathbf{d^{[2]}}(l^{(k)}_{j\;(\frac{n}{2}-1)})\right].\end{eqnarray}
The proof follows by checking four cases:
\begin{itemize}
\item[1)] Two rows of $B_0$ are orthogonal by the properties of the Kroncker product.
\item[2)] A row of $B_0$ and a row of $B$: Here we differ two cases.
                \begin{itemize}
                \item[i.] A row of $B_0$ with a row combined of rows of $C$: We check using rows of $\Gamma_k$, $k\neq 0$, the other case follows similarly.
                    \begin{eqnarray}
                    \mathbf{b_0}_t\cdot\overline{\mathbf{\Gamma}_k}^{\top}_r&=&h'_{t\;1}\cdot \overline{x_1}\overline{\sum_{j=1}^{\frac{n-2}{2}}c_{k+1\;j}}+h'_{t\;2}\cdot \overline{x_2}\overline{\sum_{j=\frac{n-2}{2}+1}^{n-2}c_{k+1\;j}}\nonumber\\
                    &&+h'_{t\;3}\cdot \overline{x_3}\overline{\sum_{j=1}^{\frac{n-2}{2}}[\mathbf{c}(l^{(k)}_{r\;1})]_j}+h'_{t\;4}\cdot \overline{x_4}\overline{\sum_{j=\frac{n-2}{2}+1}^{n-2}[\mathbf{c}(l^{(k)}_{r\;1})]_j}\nonumber\\
                    &&+...+h'_{t\;n-1}\overline{x_{n-1}}\overline{\sum_{j=1}^{\frac{n-2}{2}}[\mathbf{c}(l^{(k)}_{r\;\frac{n}{2}-1})]_j}+h'_{t\;n}\cdot \overline{x_n}\overline{\sum_{j=\frac{n-2}{2}+1}^{n-2}[\mathbf{c}(l^{(k)}_{r\;\frac{n}{2}-1})]_j}\nonumber
                    \end{eqnarray}
                   From Lemma 2, 3. $\sum_{j=1}^{\frac{n-2}{2}}c_{d\;j}=-1$ and $\sum_{j=\frac{n-2}{2}+1}^{n-2}c_{d\;j}=-1$ for all $0<d\leq \frac{n-2}{2}$. Thus, we obtain
                   \begin{eqnarray}
                   \mathbf{b_0}_t\cdot\overline{\mathbf{\Gamma}_k}^{\top}_r&=&-\sum_{i=1}^{n}h'_{t\;i}\cdot\overline{x_i}=0\nonumber
                   \end{eqnarray}
                   As $\mathbf{h'}_t$ and $(x_1\;x_2\;...\;x_n)$ are two rows of $H$.
                \item[ii.] Now taking a row of $B_0$ and other combined from $D$, we obtain
                     \begin{eqnarray}
                    \mathbf{b_0}_t\cdot\overline{\mathbf{\Gamma}_k}^{\top}_{r+(\frac{n-2}{2})}&=&h'_{t\;1}\cdot \overline{x_1}\overline{\sum_{j=1}^{\frac{n-2}{2}}d_{k+1\;j}}+h'_{t\;2}\cdot \overline{x_2}\overline{\sum_{j=\frac{n-2}{2}+1}^{n-2}d_{k+1\;j}}\nonumber\\
                    &&+h'_{t\;3}\cdot \overline{x_3}\overline{\sum_{j=1}^{\frac{n-2}{2}}[\mathbf{d}(l^{(k)}_{r\;1})]_j}+h'_{t\;4}\cdot \overline{x_4}\overline{\sum_{j=\frac{n-2}{2}+1}^{n-2}[\mathbf{d}(l^{(k)}_{r\;1})]_j}\nonumber\\
                    &&+...+h'_{t\;n-1}\overline{x_{n-1}}\overline{\sum_{j=1}^{\frac{n-2}{2}}[\mathbf{d}(l^{(k)}_{r\;\frac{n}{2}-1})]_j}+h'_{t\;n}\cdot \overline{x_n}\overline{\sum_{j=\frac{n-2}{2}+1}^{n-2}[\mathbf{d}(l^{(k)}_{r\;\frac{n}{2}-1})]_j}\nonumber
                    \end{eqnarray}
                    and by Lemma 2, 4.  $\sum_{j=1}^{\frac{n-2}{2}}d_{l\;j}=-1$ and $\sum_{j=\frac{n-2}{2}+1}^{n-2}d_{l\;j}=1$ for all $0<l\leq \frac{n-2}{2}$. Thus
                    \begin{eqnarray}
                   \mathbf{b_0}_t\cdot\overline{\mathbf{\Gamma}_k}^{\top}_{r+(\frac{n-2}{2})}&=&-\sum_{i=1}^{n}h'_{t\;i}\cdot(-1)^{i-1}\cdot\overline{x_i}=0\nonumber
                   \end{eqnarray}
                   as both $\mathbf{h'}_t$ and $(x_1\;-x_2\;...\;x_{n-1}\;-x_n)$ are rows of $H$.
                \end{itemize}
\item[3)] Now we check if two rows of $\Gamma_k$ are orthogonal. We have three possible combinations. First, a row combined from $C$, and the second combined from $D$:
    \begin{eqnarray}
    \mathbf{\Gamma_k}_t\cdot{\overline{\mathbf{\Gamma_k}_{r+\frac{n-2}{2}}}}^{\top}&=&x_1\overline{x_1}\mathbf{c^{[1]}}(k+1)\overline{\mathbf{d^{[1]}}(k+1)}^{\top}+x_2\overline{x_2}\mathbf{c^{[2]}}(k+1)\overline{\mathbf{d^{[2]}}(k+1)}^{\top}\nonumber\\
    &&+x_{3}\overline{x_{3}}\mathbf{c^{[1]}}(l^{(k)}_{t\;1})\overline{\mathbf{d^{[1]}}(l^{(k)}_{r\;1})}^{\top}+x_{4}\overline{x_{ 4}}\mathbf{c^{[2]}}(l^{(k)}_{t\;1})\overline{\mathbf{d^{[2]}}(l^{(k)}_{r\;1})}^{\top}\nonumber\\
    &&+...+x_{n-1}\overline{x_{n-1}}\mathbf{c^{[1]}}(l^{(k)}_{t\;\frac{n}{2}-1})\overline{\mathbf{d^{[1]}}(l^{(k)}_{r\;\frac{n}{2}-1})}^{\top}+x_{n}\overline{x_{n}}\mathbf{c^{[2]}}(l^{(k)}_{t\;\frac{n}{2}-1})\overline{\mathbf{d^{[2]}}(l^{(k)}_{r\;\frac{n}{2}-1})}^{\top}\nonumber\\
    \end{eqnarray}
    $x_l\overline{x_l}=1$ for all $l$, and $$\mathbf{c^{[1]}}(d)\overline{\mathbf{d^{[1]}}(e)}^{\top}+\mathbf{c^{[2]}}(d)\overline{\mathbf{d^{[2]}}(e)}^{\top}=\mathbf{c}_d\cdot\overline{\mathbf{d}}_e^{\top}=0.$$
    For any given appropriate $d$ and $e$. Using Lemma 2 2., this sum concludes to zero.\\
    Secondly, we check two rows combined from $C$, the process is the same if we choose two rows combined from $D$. We have
    \begin{eqnarray}
    \mathbf{\Gamma_k}_t\cdot\overline{\mathbf{\Gamma_k}}_{r}^{\top}&=&x_1\overline{x_1}\mathbf{c^{[1]}}(k+1)\overline{\mathbf{c^{[1]}}(k+1)}^{\top}+x_2\overline{x_2}\mathbf{c^{[2]}}(k+1)\overline{\mathbf{c^{[2]}}(k+1)}^{\top}\nonumber\\
    &&+x_{3}\overline{x_{3}}\mathbf{c^{[1]}}(l^{(k)}_{t\;1})\overline{\mathbf{c^{[1]}}(l^{(k)}_{r\;1})}^{\top}+x_{4}\overline{x_{ 4}}\mathbf{c^{[2]}}(l^{(k)}_{t\;1})\overline{\mathbf{c^{[2]}}(l^{(k)}_{r\;1})}^{\top}\nonumber\\
    &&+...+x_{n-1}\overline{x_{n-1}}\mathbf{c^{[1]}}(l^{(k)}_{t\;\frac{n}{2}-1})\overline{\mathbf{c^{[1]}}(l^{(k)}_{r\;\frac{n}{2}-1})}^{\top}+x_{n}\overline{x_{n}}\mathbf{c^{[2]}}(l^{(k)}_{t\;\frac{n}{2}-1})\overline{\mathbf{c^{[2]}}(l^{(k)}_{r\;\frac{n}{2}-1})}^{\top}\nonumber\\
    &=&\mathbf{c^{[1]}}(k+1)\overline{\mathbf{c^{[1]}}(k+1)}^{\top}+\mathbf{c^{[2]}}(k+1)\overline{\mathbf{c^{[2]}}(k+1)}^{\top}\nonumber\\
    &&+\mathbf{c^{[1]}}(l^{(k)}_{t\;1})\overline{\mathbf{c^{[1]}}(l^{(k)}_{r\;1})}^{\top}+\mathbf{c^{[2]}}(l^{(k)}_{t\;1})\overline{\mathbf{c^{[2]}}(l^{(k)}_{r\;1})}^{\top}\nonumber\\
    &&+...+\mathbf{c^{[1]}}(l^{(k)}_{t\;\frac{n}{2}-1})\overline{\mathbf{c^{[1]}}(l^{(k)}_{r\;\frac{n}{2}-1})}^{\top}+\mathbf{c^{[2]}}(l^{(k)}_{t\;\frac{n}{2}-1})\overline{\mathbf{c^{[2]}}(l^{(k)}_{r\;\frac{n}{2}-1})}^{\top}\nonumber\\
    &=&\mathbf{c}(k+1)\overline{\mathbf{c}(k+1)}^{\top}+\mathbf{c}(l^{(k)}_{t\;1})\overline{\mathbf{c}(l^{(k)}_{r\;1})}^{\top}+...+\mathbf{c}(l^{(k)}_{t\;\frac{n}{2}-1})\overline{\mathbf{c}(l^{(k)}_{r\;\frac{n}{2}-1})}^{\top}\nonumber\\
    &=&(n-2)-2-...-2=(n-2)-2(\frac{n-2}{2})=n-2-(n-2)=0.\nonumber
    \end{eqnarray}
    Here always $l^{(k)}_{t\;i}\neq l^{(k)}_{r\;i}$ as they are entries in the same column in the Latin square.
\item[4)] Finally, by choosing a row from $\Gamma_k$ and another from $\Gamma_s$, we differ three cases. The first is one of these rows is combined from $C$, and the other from $D$. In this case, we will get an equation similar to (4.10) that leads to zero. The second, is if we choose the two rows combined from $C$. So,
    \begin{eqnarray}
    \mathbf{\Gamma_k}_t\cdot\overline{\mathbf{\Gamma_s}}_{r}^{\top}&=&x_1\overline{x_1}\mathbf{c^{[1]}}(k+1)\overline{\mathbf{c^{[1]}}(s+1)}^{\top}+x_2\overline{x_2}\mathbf{c^{[2]}}(k+1)\overline{\mathbf{c^{[2]}}(s+1)}^{\top}\nonumber\\
    &&+x_{3}\overline{x_{3}}\mathbf{c^{[1]}}(l^{(k)}_{t\;1})\overline{\mathbf{c^{[1]}}(l^{(s)}_{r\;1})}^{\top}+x_{4}\overline{x_{ 4}}\mathbf{c^{[2]}}(l^{(k)}_{t\;1})\overline{\mathbf{c^{[2]}}(l^{(s)}_{r\;1})}^{\top}\nonumber\\
    &&+...+x_{n-1}\overline{x_{n-1}}\mathbf{c^{[1]}}(l^{(k)}_{t\;\frac{n}{2}-1})\overline{\mathbf{c^{[1]}}(l^{(s)}_{r\;\frac{n}{2}-1})}^{\top}+x_{n}\overline{x_{n}}\mathbf{c^{[2]}}(l^{(k)}_{t\;\frac{n}{2}-1})\overline{\mathbf{c^{[2]}}(l^{(s)}_{r\;\frac{n}{2}-1})}^{\top}\nonumber\\
    &=&\mathbf{c}(k+1)\overline{\mathbf{c}(s+1)}^{\top}+\mathbf{c}(l^{(k)}_{t\;1})\overline{\mathbf{c}(l^{(s)}_{r\;1})}^{\top}+...+\mathbf{c}(l^{(k)}_{t\;\frac{n}{2}-1})\overline{\mathbf{c}(l^{(s)}_{r\;\frac{n}{2}-1})}^{\top}\nonumber\\
    &=&-2-2-...+(n-2)-2-...-2=n-2-(n-2)=0.\nonumber\\
    \end{eqnarray}
    $l^{(k)}_{t\;i}=l^{(s)}_{r\;i}$ in exactly one position by definition of a LSESC. The third case, if we choose two rows combined from $D$, the result follows immediately by the same process as of $C$.\\
    By taking one of the blocks $\Gamma$ as $\Gamma_0$, we obtain equations similar to (4.11), which ultimately reduce to zero. This outcome occurs because, by the definition of a Latin square, there exists an index $i$ such that $l^{(k)}_{t\;i}=r$.
\end{itemize}
This finishes the proof and shows that $\Psi_{\mathbb{X}}(H)$ is a $BH(m,(\frac{n}{2}-1)n)$.
\end{proof}
\noindent By Remark 2, we can only choose matrices from $\mathcal{A}^{(2)}(m,n)$ as input, and before the definition of $B_0$ we perform column permutations if necessary.
\begin{Example}
 Considering the 6th root of unity $\omega=e^{\dfrac{2\pi\mathbf{i}}{6}}$. Then $\omega^2=\mathbf{j}$, $\omega^3=-1$, $\omega^4=-\omega$ and $\omega^5=-\mathbf{j}$.
 $$F_6=\left[\begin{array}{cccccc}
 1&1&1&1&1&1\\
 1&\omega&\mathbf{j}&-1&-\omega&-\mathbf{j}\\
 1&\mathbf{j}&-\omega&1&\mathbf{j}&-\omega\\
 1&-1&1&-1&1&-1\\
 1&-\omega&\mathbf{j}&1&-\omega&\mathbf{j}\\
 1&-\mathbf{j}&-\omega&-1&\mathbf{j}&\omega
 \end{array}\right].$$
 The sub-matrix $T$ of this matrix is
$$T=\left[\begin{array}{cccc}
-\omega&\mathbf{j}&\mathbf{j}&-\omega\\
\mathbf{j}&-\omega&-\omega&\mathbf{j}\\
\mathbf{j}&-\omega&\omega&-\mathbf{j}\\
-\omega&\mathbf{j}&-\mathbf{j}&\omega
 \end{array}\right].$$
We need LSESC of order $2$ that was defined in the previous example. Next, we compute the lateral slices of $\mathcal{Y}$, yielding the following results.
$$Y_1=\left[\begin{array}{cccc}
1&0&0&0\\
0&1&0&0\\
0&0&1&0\\
0&0&0&1
 \end{array}\right], Y_2=\left[\begin{array}{cccc}
0&1&0&0\\
1&0&0&0\\
0&0&0&1\\
0&0&1&0
 \end{array}\right].$$
 By deleting the first and the fourth rows of $F_6$ we obtain $H'$. The resulting matrix is
$$\Psi_{\mathbb{X}}(F_6)=\left[\begin{array}{cccccccccccc}
1&1&\omega&\omega&\mathbf{j}&\mathbf{j}&-1&-1&-\omega&-\omega&-\mathbf{j}&-\mathbf{j}\\
1&1&\mathbf{j}&\mathbf{j}&-\omega&-\omega&1&1&\mathbf{j}&\mathbf{j}&-\omega&-\omega\\
1&1&-\omega&-\omega&\mathbf{j}&\mathbf{j}&1&1&-\omega&-\omega&\mathbf{j}&\mathbf{j}\\
1&1&-\mathbf{j}&-\mathbf{j}&-\omega&-\omega&-1&-1&\mathbf{j}&\mathbf{j}&\omega&\omega\\
-\omega&\mathbf{j}&\mathbf{j}&-\omega&-\omega&\mathbf{j}&\mathbf{j}&-\omega&-\omega&\mathbf{j}&\mathbf{j}&-\omega\\
-\omega&\mathbf{j}&\mathbf{j}&-\omega&\mathbf{j}&-\omega&-\omega&\mathbf{j}&\mathbf{j}&-\omega&-\omega&\mathbf{j}\\
\mathbf{j}&-\omega&\omega&-\mathbf{j}&\mathbf{j}&-\omega&\omega&-\mathbf{j}&\mathbf{j}&-\omega&\omega&-\mathbf{j}\\
\mathbf{j}&-\omega&\omega&-\mathbf{j}&-\omega&\mathbf{j}&-\mathbf{j}&\omega&-\omega&\mathbf{j}&-\mathbf{j}&\omega\\
\mathbf{j}&-\omega&-\omega&\mathbf{j}&-\omega&\mathbf{j}&\mathbf{j}&-\omega&\mathbf{j}&-\omega&-\omega&\mathbf{j}\\
\mathbf{j}&-\omega&-\omega&\mathbf{j}&\mathbf{j}&-\omega&-\omega&\mathbf{j}&-\omega&\mathbf{j}&\mathbf{j}&-\omega\\
-\omega&\mathbf{j}&-\mathbf{j}&\omega&\mathbf{j}&-\omega&\omega&-\mathbf{j}&-\omega&\mathbf{j}&-\mathbf{j}&\omega\\
-\omega&\mathbf{j}&-\mathbf{j}&\omega&-\omega&\mathbf{j}&-\mathbf{j}&\omega&\mathbf{j}&-\omega&\omega&-\mathbf{j}
\end{array}\right].$$
\end{Example}
\noindent We have for any power of prime number an example of a complete set of LSESC, the set $\mathcal{L}$. Then using Lemma 1 2. we conclude the following.
\begin{Corollary}
For any $r\in\mathbb{N}^*$, there exists $BH(2(2^r+1),2^{r+1}(2^r+1))$ obtained by the operator $\Psi$.
\end{Corollary}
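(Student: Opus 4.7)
The plan is to assemble the corollary directly from three ingredients already at hand: Lemma~1 for the verification that Fourier matrices lie in $\mathcal{A}^{(1,2)}$, the Galois-field LSESC family $\mathcal{L}$ for the required complete set of LSESC, and Theorem~2 to produce the output. Set $n=2(2^r+1)$ and $m=n$, so that $F_n\in\mathcal{BH}(m,n)$. Then $\frac{n}{2}-1=2^r$ and $(\frac{n}{2}-1)n=2^{r+1}(2^r+1)$, which is exactly the desired output order.

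First I would observe that $n=2d$ with $d=2^r+1$; since $r\ge 1$, the integer $2^r$ is even and hence $d=2^r+1$ is odd. By Lemma~1 (parts 1 and 2) this gives $F_n\in\mathcal{A}^{(1)}(n,n)\cap\mathcal{A}^{(2)}(n,n)=\mathcal{A}^{(1,2)}(n,n)$, so $F_n$ is an admissible input for $\Psi$.

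Next I would invoke the Galois-field construction to produce the required LSESC. Because $2^r$ is a prime power, the set
$$\mathcal{L}=\{L_b:b\in GF(2^r)\setminus\{0\}\},\qquad (L_b)_{ij}=x_i+bx_j,$$
is a complete set of $2^r-1$ MOLS of order $2^r$ and therefore, by the equivalence $N'(n)=N(n)$ and the correspondence explained in Section~2, yields a complete set of LSESC of order $\frac{n}{2}-1=2^r$. Applying the tensor map $F$ gives the complete tensor set $\mathbb{X}=\{F(L_b):b\in GF(2^r)\setminus\{0\}\}$ of the exact size required by Theorem~2.

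Finally I would feed $(F_n,\mathbb{X})$ into Theorem~2 (after, if needed, performing the column permutations allowed by Remark~2, which preserves membership in $\mathcal{A}^{(2)}$) and simply quote the theorem: the output $\Psi_{\mathbb{X}}(F_n)$ is a Butson matrix of order $(\frac{n}{2}-1)n=2^{r+1}(2^r+1)$ whose entries are powers of the $m$th root of unity with $m=2(2^r+1)$, i.e.\ a $BH(2(2^r+1),2^{r+1}(2^r+1))$. There is no real obstacle here; the only points that need genuine attention are (i) the parity check that forces $d$ odd so that Lemma~1 part 2 applies, and (ii) the recognition that $2^r$ being a prime power guarantees the existence of $\mathbb{X}$. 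Everything else is an invocation of Theorem~2.
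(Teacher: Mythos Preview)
Your proof is correct and follows essentially the same route as the paper: the paper's justification (stated just before the corollary) consists of invoking Lemma~1 part~2 to place $F_{2(2^r+1)}$ in $\mathcal{A}^{(2)}$ and appealing to the Galois-field family $\mathcal{L}$ over $GF(2^r)$ for the required complete LSESC set, then applying Theorem~2. Your write-up is more explicit (you also cite Lemma~1 part~1 and spell out the parity check on $d=2^r+1$), and you route the LSESC through the MOLS correspondence rather than using directly that $\mathcal{L}$ is already stated to be an LSESC family, but these are cosmetic differences only.
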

\noindent This construction can be extended to include more matrices that might be inequivalent, by inputting two Butson matrices of order $n$ on the same root of unity. This is done by defining the map
$$\Psi:\mathcal{A}^{(1)}(m,n)\times \mathcal{A}^{(2)}(m,n)\to\mathcal{BH}(m,(\frac{n}{2}-1)n)$$
$$\;\;\;\;(G,H)\to\Psi(G,H).$$
We notice that in this way we extend the number of eligible input matrices, as, for instance all the Fourier matrices can be imputed as the first matrix. We notice also that $\mathcal{A}^{(1,2)}(m,n)=\mathcal{A}^{(1)}(m,n)\cap \mathcal{A}^{(2)}(m,n)$.
\begin{Corollary}
Let $(G,H)\in\mathcal{A}^{(1)}(m,n)\times \mathcal{A}^{(2)}(m,n)$. Under the same assumptions on LSESC as in theorem 2, we get
$$\Psi_{\mathbb{X}}(G,H)=\left[\begin{array}{c}
B_0\\
B
\end{array}\right],$$
with $B_0=G'\otimes \mathbf{J}_{\frac{n}{2}-1}$, where $G'$ the matrix obtained from $G$ by deleting its $(x_1\;x_2\;...\;x_n)$ and $(x_1\;-x_2\;...\;x_{n-1}\;-x_n)$ rows, and $B$ defined as in (4.5), by taking $T$ the sub-matrix of $H$ obtained as shown in Lemma 2.
\end{Corollary}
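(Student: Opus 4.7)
The plan is to mirror the proof of Theorem~2, carefully tracking which input matrix ($G$ or $H$) is responsible for each piece of the argument. The block $B_0 = G' \otimes \mathbf{J}_{\frac{n}{2}-1}$ depends only on $G$, whereas $B$ is built from the sub-matrix $T$ of $H$ supplied by Lemma~2, together with the scalars $x_1,\ldots,x_n$ read off from the two distinguished rows of $G$ guaranteed by condition \textbf{C1}. The four cases examined in Theorem~2 therefore split cleanly between the two inputs.

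Orthogonality of any two rows inside $B_0$ is immediate from the Kronecker-product identity and the fact that distinct rows of $G'$ are orthogonal (since $G$ is Butson). Orthogonality of any two rows inside $B$ reproduces cases~3) and~4) of the proof of Theorem~2 word for word: those computations rely only on the inner-product identities for $C$ and $D$ from Lemma~2 (properties~1 and~2), on the LSESC property of $\mathbb{X}$, and on $x_\ell \overline{x_\ell} = 1$. Since $T$ here is extracted from $H \in \mathcal{A}^{(2)}(m,n)$, Lemma~2 still applies and those steps carry over verbatim.

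The only case that genuinely mixes the two inputs is a row $\mathbf{b_0}_t$ of $B_0$ against a row $\mathbf{\Gamma_k}_r$ of $B$. Following the collapse in case~2(i) of Theorem~2 and using Lemma~2 property~3, for a $C$-type row one obtains
$$\mathbf{b_0}_t \cdot \overline{\mathbf{\Gamma_k}}_r^{\top} \;=\; -\sum_{i=1}^{n} g'_{t\,i}\,\overline{x_i} \;=\; 0,$$
since $\mathbf{g'}_t$ and $(x_1,\ldots,x_n)$ are distinct rows of the Butson matrix $G$. For a $D$-type row, Lemma~2 property~4 introduces the alternating weights $(-1)^{i-1}$, producing $-\sum_i g'_{t\,i}(-1)^{i-1}\overline{x_i} = 0$, which vanishes exactly because $(x_1,-x_2,\ldots,x_{n-1},-x_n)$ also belongs to $G$ by \textbf{C1}. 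The argument when one of the two blocks is $\Gamma_0$ is analogous, as in Theorem~2.

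There is no essentially new obstacle beyond Theorem~2; the verification reduces to book-keeping that the scalars $x_i$ are sourced from $G$ (so \textbf{C1} on $G$ suffices for its role) while $T$ is sourced from $H$ (so \textbf{C2} on $H$ suffices for its role). This is precisely what justifies weakening the hypothesis from $\mathcal{A}^{(1,2)}(m,n)$ to the product $\mathcal{A}^{(1)}(m,n) \times \mathcal{A}^{(2)}(m,n)$ and concludes that $\Psi_{\mathbb{X}}(G,H)$ is a $BH(m,(\frac{n}{2}-1)n)$.
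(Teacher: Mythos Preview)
Your proposal is correct and follows essentially the same route as the paper: mirror the proof of Theorem~2, with rows of $B_0$ and of $B$ handled exactly as before, and in the mixed case replace $\mathbf{h'}_t$ by $\mathbf{g'}_t$ so that the vanishing uses the orthogonality of $\mathbf{g'}_t$ against the two distinguished rows $(x_1,\ldots,x_n)$ and $(x_1,-x_2,\ldots,-x_n)$ of $G$. Your write-up is in fact more explicit than the paper's in separating which hypothesis (\textbf{C1} on $G$ versus \textbf{C2} on $H$) is invoked at each step.
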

\begin{proof}
The proof follows exactly in the same way as in Theorem 2. The orthogonality of rows of $B_0$ is saved by the Kronecker product. The orthogonality of rows of $B$ follows exactly as shown in Theorem 2. The proof of orthogonality of a row of $B$ and other from $B_0$ take the same steps as mentioned in 2) of the Theorem 2, by replacing the row $\mathbf{h'}_t$ by $\mathbf{g'}_t$.
\end{proof}
\noindent By applying this procedure to Hadamard matrices, we obtain a construction of this matrices. Note that any Hadamard matrix verifies \textbf{C1} and \textbf{C2}. The first is because any normalized Hadamard matrix contains a row $(1\;-1\;1\;...\;-1)$, and the second is trivial. So, we define a process on Hadamard matrices:
$$\Psi:\mathcal{H}_n\times\mathcal{H}_n\to\mathcal{H}_{(\frac{n}{2}-1)n}$$
$$\;\;\;\;(G,H)\to\Psi(G,H).$$
\begin{Corollary}
Let $(G,H)\in\mathcal{H}_n\times\mathcal{H}_n$. Under the same assumptions as in Theorem 2 for the set of LSESC, we obtain the matrix
$$\Psi_{\mathbb{X}}(G,H)=\left[\begin{array}{c}
B_0\\
B
\end{array}\right]$$
with $B_0=G'\otimes \mathbf{J}_{\frac{n}{2}-1}$, where $G'$ the matrix obtained from $G$ by deleting its $(x_1\;x_2\;...\;x_n)$ and $(x_1\;-x_2\;...\;x_{n-1}\;-x_n)$ rows, and $B$ defined as in (4.5), by taking $T$ the sub-matrix of $H$ similarly as shown in Lemma 2.
\end{Corollary}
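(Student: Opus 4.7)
The plan is to obtain this corollary as the $m=2$ specialization of Corollary 3, together with the observation that a $BH(2,N)$ is, by definition, a Hadamard matrix of order $N$. The paragraph preceding the statement already verifies both membership conditions: after a suitable column permutation, a normalized Hadamard matrix contains the all-ones row together with a row of the form $(1,-1,1,\dots,-1)$, so it lies in $\mathcal{A}^{(1)}(2,n)$ with $(x_1,\dots,x_n)=(1,1,\dots,1)$; and it trivially lies in $\mathcal{A}^{(2)}(2,n)$ because all entries are $\pm 1$ and some row/column pair necessarily meets at a $-1$ entry. Hence $\mathcal{H}_n \subseteq \mathcal{A}^{(1,2)}(2,n)$, so both $G$ and $H$ are admissible inputs to the map of Corollary 3.

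First I would invoke Corollary 3 applied to $(G,H)$ with $m=2$; this delivers the row-orthogonality of $\Psi_{\mathbb{X}}(G,H)$ for free, since the argument in Theorem 2 and Corollary 3 relies only on the structural properties of $T$ recorded in Lemma 2 and on the LSESC condition, neither of which changes under restriction to $\pm 1$ entries. The only additional task is then to verify that $\Psi_{\mathbb{X}}(G,H)$ has $\pm 1$ entries, so that the Butson output is actually Hadamard. For $B_0 = G'\otimes\mathbf{J}_{\frac{n}{2}-1}$ this is immediate since $G'\subseteq G\in\mathcal{H}_n$. For $B$ I would note: the scalars $x_d$ are entries of a row of $G$, hence $\pm 1$; the matrices $T^{[1]}$ and $T^{[2]}$ are submatrices of $H$, hence $\pm 1$; and each $\mathbf{Y}^{(k)}_j = I_2\otimes\mathbf{X}^{(k)}_j$ is a permutation matrix by Remark 1(1) combined with the definition of $G_2$, so left multiplication by it only permutes entries. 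Therefore every block $\mathbf{Y}^{(k)}_j T_{x_d,x_{d+1}}$ and every $E_i(x_1,x_2)$ has $\pm 1$ entries, and $\Psi_{\mathbb{X}}(G,H)$ is a Hadamard matrix of order $(\tfrac{n}{2}-1)n$.

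I do not expect any genuine obstacle: the combinatorial core of the argument is entirely absorbed by Theorem 2 and Corollary 3, and the present statement reduces to tracking that entries remain in $\{-1,+1\}$ and invoking the equivalence between $BH(2,\cdot)$ and the Hadamard class. The only mildly delicate point — checking that every normalized Hadamard matrix automatically satisfies both C1 and C2 — is already handled by the remark immediately preceding the corollary.
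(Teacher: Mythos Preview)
Your argument is correct and matches the paper's own one-line proof, which simply says the result follows exactly as in Theorem 2; you route through the two-input Butson corollary instead, but since that corollary's proof itself just points back to Theorem 2, the substance is identical. One bookkeeping fix: the corollary you invoke is numbered Corollary 4 in the paper (Corollary 3 is the existence statement about $BH(2(2^r+1),2^{r+1}(2^r+1))$), so update your cross-references accordingly; your added verification that all entries remain in $\{-1,+1\}$ is a welcome detail the paper leaves implicit.
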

\begin{proof}
The proof follows exactly as in Theorem 2.
\end{proof}
\noindent We conclude this section by counting all the possible outputs of $\Psi$ for a given $n$. We denote by $d_H$ the half of the number of rows that verify \textbf{C1} in an input matrix $H$. Then the number of obtained matrices is
$$\sum_{H\in\mathcal{A}^{(1)}(m,n)}|MOLS(\frac{n}{2}-1)|\cdot|\mathcal{A}^{(2)}(m,n)|\cdot d_H.$$
Similarly, for Hadamard matrices, we obtain
$$\sum_{H\in\mathcal{H}_n}|MOLS(\frac{n}{2}-1)|\cdot|\mathcal{H}_n|\cdot d_H$$
different matrix. These matrices can be equivalent, and their classification remains as an open problem.
\section{Conclusion}
We proposed a method for constructing Butson matrices of order $n(n-1)$ from existing Butson matrices of order $n$. This construction can be achieved by using either one input matrix or two input matrices, thereby increasing the number of resulting matrices, which may be inequivalent. Additionally, we described a similar approach for constructing Butson matrices of order $n(\frac{n}{2}-1)$ from one or two existing matrices of order $n$. The input matrices must satisfy specific conditions, and we demonstrated that some Fourier matrices meet these requirements. Furthermore, we derived results pertaining to the construction of Hadamard matrices, which enhance our previous findings presented in \cite{h2}.

\end{document}